\newcommand{\levy}{L\'{e}vy }
\newcommand{\p}{{\mathbb P}}
\newcommand{\e}{{\mathbb E}}
\newcommand{\D}{{\mathrm d}}
\newcommand{\1}[1]{\mbox{\rm\large  1}_{\{#1\}}}
\renewcommand{\l}{\lambda}
\renewcommand{\t}{\theta}
\renewcommand{\a}{\alpha}
\renewcommand{\b}{\beta}
\newcommand{\eqd}{\stackrel{d}{=}}
\newcommand{\T}{T}
\newcommand{\tauR}{\widehat \tau}
\newcommand{\phiR}{{\widehat\phi}}
\newcommand{\eRefR}{\widehat \e^a}
\newcommand{\eRef}{\e^a}
\newcommand{\pRefR}{\widehat \p^a}
\newcommand{\pRef}{\p^a}
\newtheorem{cor}{Corollary}
\newtheorem{lemma}{Lemma}
\newtheorem{prop}{Proposition}
\newtheorem{remark}{Remark}
\begin{document}
\title[Exit problems under continuous and Poisson observations]{Strikingly simple identities relating exit problems for L\'evy processes under continuous and Poisson observations}


\author[H. Albrecher]{Hansj\"{o}rg Albrecher}
\email{hansjoerg.albrecher@unil.ch}
\address{University of Lausanne and Swiss Finance Institute}
\author[J. Ivanovs]{Jevgenijs Ivanovs}
\email{jevgenijs.ivanovs@unil.ch}
\address{University of Lausanne}




\begin{abstract}
We consider exit problems for general \levy processes, where the first passage over a threshold is detected either immediately or at an epoch of an independent homogeneous Poisson process. It is shown that the two corresponding one-sided problems are related through a surprisingly simple identity. Moreover, we identify a simple link between two-sided exit problems with one continuous and one Poisson exit. Finally, Poisson exit of a reflected process is connected to the continuous exit of a process reflected at Poisson epochs, and a link between some Parisian type exit problems is established. With the appropriate perspective, the proofs of all these relations turn out to be quite elementary. For spectrally one-sided \levy processes this approach enables alternative proofs for a number of previously established identities, providing additional insight.
\end{abstract}

\subjclass[2010]{Primary 60G51; Secondary 91B30}

\keywords{\levy processes, exit problems, Poisson observation, occupation times, Parisian ruin}

\thanks{Financial support by the Swiss National Science Foundation Project 200020 143889 is gratefully acknowledged.}

\maketitle

\section{Introduction}
Let $X=(X_t,t\geq 0)$ be a real-valued \levy process, and let $\T_i,i\geq 1$ be the epochs of an independent Poisson process with intensity~$\lambda>0$; add $\T_0=0$. The probability law corresponding to $X$ started at $u$ will be denoted by $\p_u$ (with $\e_u$ denoting the expectation).
When $u$ is not mentioned explicitly we assume that $u=0$ and write simply $\p $ and $\e$.
 Define 
\begin{align*}&\tau_0^-=\inf\{t\geq 0:X_t<0\}, &\tau_a^+&=\inf\{t\geq 0:X_t>a\},\\
&\tauR_0^-=\min\{\T_i,i\in\mathbb N_0:X_{T_i}<0\}, &\tauR_a^+&=\min\{\T_i,i\in\mathbb N_0:X_{T_i}>a\},
\end{align*}
which we interpret as the first passage times under continuous and Poisson observations, respectively.
Observe that $\tau_0^-<\tauR_0^-$ and, moreover, $\tauR_0^-$ converges in probability to $\tau_0^-$ as $\lambda\rightarrow\infty$ (the same is true for $\tau_a^+$ and $\tauR_a^+$). Thus exit theory under Poisson observation can be regarded as a generalization of the classical exit theory.
Throughout this paper, however, we keep $\lambda>0$ fixed.

Observation at Poisson epochs is both of theoretical and practical interest.
Firstly, some exit problems with Poisson observation yield transforms
of certain occupation times, e.g.
\[\p_u(\tau_0^-<\tauR_a^+)=\e_u\left[\exp\left(-\lambda\int_0^{\tau_0^-}\1{X_t>a}\D t\right);\tau_0^-<\infty\right].\]
Secondly, Poisson observation is relevant in various applications such as queueing (see e.g. \cite{bekker09}), reliability and insurance risk theory (see e.g. \cite{albrecher2011randomized,saj13}). 
In particular, 
in many applications discrete-time observation of stochastic processes would often be considered more natural, but for equidistant discrete time epochs the explicit and tractable analytical structure of continuous-time processes is typically destroyed, so that one is forced towards numerical techniques for the determination of exit probabilities and related quantities. The Poisson observation structure is a bridge between continuous-time and discrete-time observation that still leads to rather explicit, and as will be shown below, also somewhat elegant modifications of the continuous-time formulas.


\subsection{Overview and organization}
In order to stress the intuition behind the derivation of the identities, we will start with a simple case and gradually generalize the setup. Most of the results are stated in terms of relations between transforms, but can also be understood as relations between the corresponding laws in an obvious way. \\
Some of the wording throughout the manuscript will be in terms of the insurance application, where $X$ is the surplus process of a portfolio of  insurance contracts, $\tau_0^-$ is the time of ruin of the portfolio, $\{\tau_0^-=\infty\}$ is the event of (infinite-time) survival, and $\tauR_0^-$ is the time of observed ruin under Poisson observation of the surplus process (in the application the Poisson epochs can for instance be interpreted as the observation times of the regulatory authority). \\

In Section~\ref{sec:exit1} we discuss survival probabilities corresponding to the two observation types, and then proceed to the general one-sided exit problems including the time of exit and the overshoot.
In Section~\ref{sec:exit2} we consider more complex problems. Firstly, the two-sided exit problem with one continously observed and one \mbox{(Poisson-)}discretely observed boundary is related to the one where the observation types at the boundaries are interchanged. Secondly, we provide a link between Poisson exit of a reflected process and continuous exit of the process reflected at Poisson epochs. We also show that a two-sided problem with Poisson exit at both boundaries yields an identity as well, but with a non-standard first passage time. The latter quantity is then linked to a Parisian ruin problem with Erlang-distributed implementation delay. 
Finally, we establish a link between Parisian ruin problems with continuous and Poisson observations. We conclude with Section~\ref{sec:spectral1}, where we specialize to the case of spectrally-one sided processes and demonstrate the use of our simple identities, providing simpler proofs and additional insight to some identities established in earlier literature. 

\subsection{Preliminaries}
The Wiener-Hopf factorization plays a crucial role in the derivations below. 
Define
\begin{align*}&\underline X_t=\inf\{X_s,s\in[0,t]\}, &\underline G_t=\inf\{s\in[0,t]:X_s\wedge X_{s-}=\underline X_t\},
\end{align*}
 the infimum and its (first) time of occurrence up to horizon~$t$. Similarly, the supremum and its (last) time of occurrence are defined by  
 \begin{align*}
&\overline X_t=\sup\{X_s,s\in[0,t]\}, &\overline G_t=\sup\{s\in[0,t]:X_s\vee X_{s-}=\overline X_t\}.
\end{align*}
Finally, let the pairs $(D,T^D)$ and $(U,T^U)$ be distributed as $(\underline X_{\T_1},\underline G_{\T_1})$ and $(\overline X_{\T_1},\overline G_{T_1})$ respectively (under $\p$), and sampled independently of each other and of everything else ($D$ and $U$ stand for `down' and `up').
Recall that according to the Wiener-Hopf factorization we have
\begin{align*}(\underline X_{\T_1},\underline G_{\T_1},X_{\T_1}-\underline X_{\T_1},\T_1-\underline G_{\T_1})&\eqd(D,T^D,U,T^U)\\&\eqd (X_{\T_1}-\overline X_{\T_1},\T_1-\overline G_{\T_1},\overline X_{\T_1},\overline G_{\T_1}),\end{align*}
see, e.g.,~\cite[Thm.\ VI.5]{bertoin}, and \cite{dieker} for applications of factorization embeddings.

\section{One-sided exit}\label{sec:exit1}
\subsection{Survival probability}
Let us first consider
\begin{align}&\phi(u):=\p_u(\tau_0^-=\infty), &\phiR(u):=\p_u(\tauR_0^-=\infty),\label{surv}\end{align}
which in the insurance application are the probabilities of survival with initial capital $u$ under continuous and Poisson observation, respectively.
In fact, the two quantities are connected by two very simple relations:
\begin{prop}\label{prop:survival}
For $u\geq 0$ it holds that 
\begin{align}
&\phiR(u)=\e\phi(u+U),\label{rel1}\\ &\phi(u)=\e\phiR(u+D).\label{rel2}\end{align}
\end{prop}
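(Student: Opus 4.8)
The plan is to reformulate both survival probabilities in terms of all-time infima and to reduce each identity to a distributional statement that can be checked from a one-step decomposition at the first Poisson epoch. Writing $\underline X_\infty:=\inf_{t\ge0}X_t$ and $\underline S:=\inf_{n\ge 0}X_{\T_n}$ (both $\le 0$ since $X_0=0$), survival under the two observation schemes means $\underline X_\infty\ge -u$ and $\underline S\ge -u$, so that $\phi(u)=\p(\underline X_\infty\ge -u)$ and $\phiR(u)=\p(\underline S\ge -u)$ for $u\ge0$. Since $D$ and $U$ are sampled independently of $X$, conditioning on them turns the right-hand sides of \eqref{rel2} and \eqref{rel1} into $\p(\underline S+D\ge -u)$ and $\p(\underline X_\infty+U\ge -u)$. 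Hence \eqref{rel2} is equivalent to the distributional identity $\underline X_\infty\eqd \underline S+D$ with $D$ independent of $\underline S$, while \eqref{rel1} is equivalent to $\underline S\eqd\min(0,\underline X_\infty+U)$ with $U$ independent of $\underline X_\infty$ (the truncation at $0$ appears because $\underline S\le 0$ whereas $\underline X_\infty+U$ may be positive, and \eqref{rel1} only compares the two right tails at levels in $(-\infty,0]$).

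Next I would derive the governing recursions by splitting at the first Poisson time $\T_1$. By the strong Markov property of $X$ at $\T_1$ and independence of increments,
\begin{equation*}
\underline X_\infty\eqd\min\bigl(\underline X_{\T_1},\,X_{\T_1}+\underline X_\infty'\bigr),\qquad
\underline S\eqd\min\bigl(0,\,X_{\T_1}+\underline S'\bigr),
\end{equation*}
where $\underline X_\infty'\eqd\underline X_\infty$ and $\underline S'\eqd\underline S$ are fresh independent copies; for the second identity one also uses that $(\T_{n}-\T_1)_{n\ge1}$ is again a Poisson process by the lack-of-memory property. The Wiener--Hopf factorization recalled above supplies the crucial joint law: $\underline X_{\T_1}\eqd D$ and $X_{\T_1}=\underline X_{\T_1}+(X_{\T_1}-\underline X_{\T_1})\eqd D+U$ with $D$ and $U$ independent. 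Substituting this into the two displays yields the Lindley-type recursions
\begin{equation*}
\underline X_\infty\eqd D+\min(0,\,U+\underline X_\infty'),\qquad
\underline S\eqd\min(0,\,D+U+\underline S').
\end{equation*}

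Finally I would check that each candidate law solves the appropriate recursion and invoke uniqueness. For \eqref{rel2}, inserting the $\underline S$-recursion into $\underline S+D$ and using $D\perp U$ shows, after regrouping the nested minima, that $\underline S+D$ satisfies exactly the recursion for $\underline X_\infty$; for \eqref{rel1} one verifies analogously that $\min(0,\underline X_\infty+U)$ satisfies the recursion for $\underline S$. Both verifications are elementary manipulations of nested minima in which independent copies of $D$, $U$, $\underline X_\infty$ and $\underline S$ are matched termwise. The identities then follow provided the recursion has a unique solution supported on $(-\infty,0]$. This is the one genuinely delicate point: uniqueness holds in the transient case $X\to+\infty$, where $\underline X_\infty$ and $\underline S$ are proper (a.s.\ finite); in the remaining cases (oscillation or drift to $-\infty$) one checks that $\underline X_\infty=\underline S=-\infty$ a.s., so $\phi\equiv\phiR\equiv0$ and both relations hold trivially. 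I expect the main obstacle to be precisely this properness/uniqueness dichotomy together with the bookkeeping of independent copies; the distributional reformulation and the Wiener--Hopf splitting $X_{\T_1}\eqd D+U$ are what render the argument elementary.
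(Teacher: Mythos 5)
Your argument is correct, but it takes a genuinely different route from the paper's. Both proofs rest on the same Wiener--Hopf input, namely that $(\underline X_{\T_1}, X_{\T_1}-\underline X_{\T_1})\eqd (D,U)$ with $D$ and $U$ independent; the difference lies in how this is exploited. The paper iterates the decomposition over all inter-observation intervals and observes that the sequence of observed values $u+\widehat S_i$ and the sequence of inter-observation infima $u+S_i$ are partial sums of one and the same i.i.d.\ string $D_1,U_1,D_2,U_2,\dots$ read off with a shift of one letter; the resulting relations $(\widehat S_i+D)_{i\ge0}\eqd(S_i)_{i\ge0}$ and $(\widehat S_i)_{i\ge 1}\eqd(S_i+U)_{i\ge0}$ yield \eqref{rel1}--\eqref{rel2} at once, with no limiting or uniqueness argument. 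You instead pass to the all-time infima, derive Lindley-type distributional fixed-point equations by splitting only at $\T_1$, verify that $\underline S+D$ and $\min(0,\underline X_\infty+U)$ solve the respective equations, and appeal to uniqueness of the proper solution; your reduction of \eqref{rel1}--\eqref{rel2} to the two distributional identities is exactly the right reformulation, and the verifications go through (the independence bookkeeping you describe is sound). The cost is that the burden shifts to the uniqueness step, which you rightly flag as the delicate point and which still needs an actual proof: e.g., iterate the equation $n$ times to get $\underline S\eqd\min\bigl(0,\Sigma_1,\dots,\Sigma_n,\Sigma_n+\underline S^{(n)}\bigr)$ with $\Sigma_k$ the embedded random walk and let $n\to\infty$ using $\Sigma_n\to+\infty$ in the transient case, while the degenerate cases require the standard fact that $X$ and the embedded walk $X_{\T_n}$ share the same long-run behaviour, so that both survival probabilities vanish together. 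The paper's direct identification of the two point patterns avoids all of this and, moreover, extends verbatim to joint laws including the time components, which is what powers Propositions~\ref{prop:exit1} and~\ref{prop:exit2} and is harder to reach from the fixed-point formulation.
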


\begin{figure}[hb]
\centering
\includegraphics{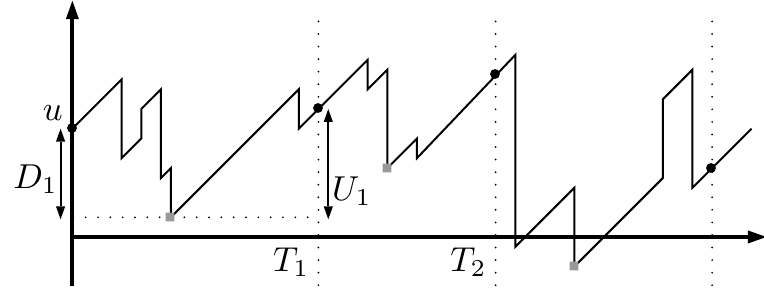}
\caption{Schematic sample path and embeddings}
\label{fig:WH_points}
\end{figure}
\begin{proof}
Survival under Poisson observation is determined by the sequence $u+X_{\T_i}$, whereas survival under continuous observation is determined by the sequence of infima in between the observation epochs (black and grey dots in Figure~\ref{fig:WH_points}, respectively). Let $D_1=\underline X_{T_1}, U_1=X_{T_1}-\underline X_{T_1}$ and 
define $D_{i+1},U_{i+1}$ $(i\ge 1)$ in the same way but for the shifted process $X_{T_i+t}-X_{T_i}$ and exponential time $T_{i+1}-T_i$.
Let $(\widehat S_i,i\in\mathbb N_0)$ and $(S_i,i\in\mathbb N_0)$ be the partial sum processes corresponding to
\[(0,D_1+U_1,D_2+U_2,D_3+U_3,\ldots)\quad \text{and}\quad (D_1,U_1+D_2,U_2+D_3,\ldots),\]
respectively; $u+\widehat S_i$ and $u+S_i$ are the heights of the black and grey dots in Figure~\ref{fig:WH_points}.
Observe that all $U_i$ and $D_i$ are independent, because of independence of increments and  the Wiener-Hopf factorization.
Since the $D_i$'s have the law of $D$ we obtain
\begin{equation}\label{eq:S}(\widehat S_i+D,i\geq 0)\eqd(S_i,i\geq 0).\end{equation}
Similarly, 
\begin{equation}\label{eq:hatS}(\widehat S_i, i\geq 1)\eqd(S_i+U, i\geq 0).\end{equation}
Hence
\[\phi(u)=\p(u+\min_{i\geq 0}S_i\geq 0)=\p(u+D+\min_{i\geq 0}\widehat S_i\geq 0)=\e\phiR(u+D),\]
and, since $u\geq 0$, 
\begin{align*}\phiR(u)&=\p(u+\min_{i\geq 0} \widehat S_i\geq 0)=\p(u+\min_{i\geq 1}\widehat S_i\geq 0)\\&=\p(u+U+\min_{i\geq 0}S_i\geq 0)=\e\phi(u+U).\end{align*}
\end{proof}

\begin{remark}\normalfont
Relation \eqref{rel1} allows to interpret the transition from continuous to discrete Poisson observation simply as a (random) increase of the starting value (initial capital) $u$ by $U$, as far as the survival probability is concerned; that is the structure of $\phi$ as a function of $u$ is otherwise completely preserved. Likewise, Relation \eqref{rel2} shows that moving from discrete Poisson to continuous observation preserves the structure, reducing the initial capital by $D$ (which has all its probability mass on the negative half-line). 
\end{remark}

\begin{remark}\normalfont
Suppose we modify the Poisson observation model, so that there is no observation at time~$0$. Then \eqref{rel1} is still valid (even for negative $u$ then), whereas \eqref{rel2} does not hold any more.
\end{remark}

\begin{remark}\normalfont
By the same token one can connect the finite-time survival probabilities $\phi(u,T_i):=\p_u(\tau_0^->T_i)=\p(u+\min_{j\leq i-1}S_j\geq 0)$ and $\phiR(u,T_i):=\p_u(\tauR_0^->T_i)=\p(u+\min_{j\leq i}\widehat S_j\geq 0)$ for $i\in\mathbb N$:
\[\phiR(u,T_i)=\e \phi(u+U,T_i),\quad\phi(u,T_i)=\e\phiR(u+D,T_{i-1}).\]
That is, survival under continuous observation up to an independent Erlang distributed time horizon is intimately related to survival under Poisson observation up to a certain arrival epoch. 
\end{remark}

\subsection{The general identities}
\begin{prop}\label{prop:exit1}
For $u\geq 0$ and $\a,\b\geq 0$ it holds that
\begin{align}
\label{eq:exit1a}&\e_u\left(e^{-\a \tauR_0^-+\b X_{\tauR_0^-}};\tauR_0^-<\infty\right)\\&\qquad =\e\left[e^{-\a T^U}\e_{u+U}\left(e^{-\a \tau_0^-+\b X_{\tau_0^-}};\tau_0^-<\infty\right)\right]\e e^{-\a T^D+\b D},\nonumber\\
\label{eq:exit1b}&\e_{u}\left(e^{-\a \tau_0^-+\b X_{\tau_0^-}};\tau_0^-<\infty\right)\e e^{-\a T^D+\b D}\\&\qquad =\e\left[ e^{-\a T^D}\e_{u+D}\left(e^{-\a \tauR_0^-+\b X_{\tauR_0^-}};\tauR_0^-<\infty\right)\right].\nonumber
\end{align}
\end{prop}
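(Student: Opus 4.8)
The plan is to extend the path-decomposition argument of Proposition~\ref{prop:survival} so that it tracks not just survival but the discounted value of the overshoot at the exit time. The key structural insight from the previous proof is the pair of identities \eqref{eq:S} and \eqref{eq:hatS}, which relate the two embedded walks $(\widehat S_i)$ and $(S_i)$ up to the independent shifts $D$ and $U$. For the survival probability only the running minimum mattered, but now I must also keep track of \emph{when} the walk first goes below $-u$ and by \emph{how much}. So first I would introduce, for each embedded walk, the appropriate first-passage index together with the accumulated time and the overshoot at that index, and express both sides of \eqref{eq:exit1a}--\eqref{eq:exit1b} as joint transforms over these discrete quantities.

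The main idea is that $\tauR_0^-$ is exactly the Poisson epoch $\T_N$ where $N=\min\{i:u+\widehat S_i<0\}$, so that $X_{\tauR_0^-}=u+\widehat S_N$ and $\tauR_0^-$ is the sum of the corresponding interarrival blocks; the continuous exit time $\tau_0^-$, by contrast, occurs \emph{within} the block straddling the first grey dot to dip below $-u$, and its overshoot is governed by the continuously observed infimum rather than by the value $X_{\T_i}$ at an epoch. I would therefore decompose each relevant epoch-time $\T_i$ into its constituent pieces $T^D$ and $T^U$ consistent with the Wiener--Hopf factorization recorded in the Preliminaries, matching the decomposition $\T_1-\underline G_{\T_1}$ and $\underline G_{\T_1}$ against the $(D,T^D)$ and $(U,T^U)$ building blocks. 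Substituting \eqref{eq:S} into the transform for $\tau_0^-$ then produces exactly a shift of the starting point by $D$ together with the detached factor $\e e^{-\a T^D+\b D}$, which is where the product form on the left of \eqref{eq:exit1b} comes from; the symmetric substitution \eqref{eq:hatS} produces the $U$-shift and the $\e^{-\a T^U}$ factor appearing in \eqref{eq:exit1a}.

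Concretely, I expect the proof to run as follows. First I would write $\e_{u}(e^{-\a\tau_0^-+\b X_{\tau_0^-}};\tau_0^-<\infty)$ by conditioning on the grey-dot walk: the infimum process $u+S_i$ reaches below zero for the first time at some index, and on the event that this happens strictly between epochs the contribution splits into a ``time to the epoch preceding the fatal block'' part and an ``overshoot of the continuous infimum within that block'' part. Then I would apply \eqref{eq:S} to replace $(S_i)$ by $(\widehat S_i+D)$, which simultaneously shifts the starting capital to $u+D$ and, because the first block of the $\widehat S$-walk corresponds to the decomposition of $\T_1$, peels off the factor $\e e^{-\a T^D+\b D}$. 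The right-hand side is then recognizable as $\e[e^{-\a T^D}\e_{u+D}(e^{-\a\tauR_0^-+\b X_{\tauR_0^-}};\tauR_0^-<\infty)]$, giving \eqref{eq:exit1b}; identity \eqref{eq:exit1a} follows by the mirror computation using \eqref{eq:hatS} and $u\geq 0$ exactly as in Proposition~\ref{prop:survival}.

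The hard part will be bookkeeping the time and overshoot contributions across the block that straddles the exit, since under continuous observation the exit happens at the infimum \emph{inside} a block whereas under Poisson observation it happens at the block endpoint. The subtlety is to verify that the joint law of (time elapsed, overshoot) carried by the building blocks $(D,T^D)$ and $(U,T^U)$ matches the Wiener--Hopf decomposition stated in the Preliminaries precisely, so that the detached scalar factors $\e e^{-\a T^D+\b D}$ and $\e^{-\a T^U}$ emerge cleanly rather than entangling with the starting-point shift. Once the independence of the $U_i,D_i$ and the factorization of each $\T_i$ into $T^D+T^U$ pieces are invoked, the discounting $e^{-\a\T_i}$ factorizes across blocks and the argument reduces to the same minimum-tracking used before, now decorated with the extra transform weights.
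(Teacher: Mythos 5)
Your overall architecture coincides with the paper's: extend the embedded walks of Proposition~\ref{prop:survival} by a time coordinate using the Wiener--Hopf pairs $(D,T^D)$ and $(U,T^U)$, upgrade \eqref{eq:S} and \eqref{eq:hatS} to two-dimensional distributional identities, and compare the first-passage indices of the two decorated walks. Up to that point the plan is sound and matches the paper's proof.

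There is, however, a genuine gap at exactly the step you flag as ``the hard part'', and the resolution you sketch for it is incorrect. You claim that substituting \eqref{eq:S} into the transform of $(\tau_0^-,X_{\tau_0^-})$ ``produces a shift of the starting point by $D$ together with the detached factor $\e e^{-\a T^D+\b D}$''. A single substitution cannot do both: in $(S_i,G_i)_{i\geq 0}\eqd(\widehat S_i+D,T_i+T^D)_{i\geq 0}$ the variable $D$ enters the first-passage index $\min\{i\colon u+D+\widehat S_i<0\}$, so $e^{\b D}$ cannot be pulled out as an independent scalar; what the substitution actually yields is the shift $u\mapsto u+D$ and the factor $e^{-\a T^D}$ \emph{inside} the outer expectation, i.e.\ the right-hand side of \eqref{eq:exit1b}. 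Moreover, \eqref{eq:S} only relates the two embedded walks to each other; $(\tau_0^-,X_{\tau_0^-})$ is not a functional of the grey-dot walk, so \eqref{eq:S} alone cannot connect them. The missing ingredient is the strong Markov property applied at $\tau_0^-$ together with memorylessness of the residual inter-observation time: after the continuous crossing, the time to the next Poisson epoch is again exponential with rate $\lambda$, so the further descent from $X_{\tau_0^-}$ to the infimum of the straddling block, jointly with the elapsed time, is an independent copy of $(D,T^D)$. This is what identifies the transform of the first grey point below $0$ with the product $\e_u\bigl(e^{-\a\tau_0^-+\b X_{\tau_0^-}};\tau_0^-<\infty\bigr)\,\e e^{-\a T^D+\b D}$, and it is the sole source of the detached scalar factor in both \eqref{eq:exit1a} and \eqref{eq:exit1b}. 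Without this step the argument does not close; with it, the rest of your plan goes through exactly as in the paper.
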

Observe that the left-hand side of~\eqref{eq:exit1b} gives the transform of the undershoot of the first grey point below 0 in Figure~\ref{fig:WH_points}, according to the strong Markov property applied at $\tau_0^-$. Now one can establish the relation between black and grey points as in the proof of Proposition~\ref{prop:survival}, additionally taking time into account.
Essentially, we just shift the picture so that we start at the first grey point.
\begin{proof}[Proof of Proposition~\ref{prop:exit1}]
Let $T^D_1=\underline G_{T_1}, T^U_1=T_1-\underline G_{T_1}$ and define $T^D_{i+1},T^U_{i+1}$ in the same way but for the shifted process $X_{T_i+t}-X_{T_i}$ and exponential time $T_{i+1}-T_i$.
As in the proof of~Proposition~\ref{prop:survival} we consider the sequences of black and grey dots in Figure~\ref{fig:WH_points}, but now we also add the time component: $(\widehat S_i,T_i),i\in\mathbb N_0$ and $(S_i,G_i),i\in\mathbb N_0$ which are the partial sum processes corresponding to
\begin{align*}
&((0,0),(D_1+U_1,T^D_1+T^U_1),(D_2+U_2,T^D_2+T^U_2),\ldots)\quad\text{and}\\
&((D_1,T^D_1),(U_1+D_2,T^U_1+T^D_2),\ldots),
\end{align*}
respectively. Similarly to~\eqref{eq:S} and~\eqref{eq:hatS} we observe that
\begin{align*}
((\widehat S_i+D,T_i+T^D),i\geq 0)&\eqd((S_i,G_i),i\geq 0)\\
((\widehat S_i,T_i),i\geq 1)&\eqd((S_i+U,G_i+T^U),i\geq 0).
\end{align*}
Letting $\widehat N_u=\min\{i\geq 0:u+\widehat S_i<0\}=\min\{i\geq 1:u+\widehat S_i<0\}$ and $N_u=\min\{i\geq 0:u+S_i<0\}$ be the first passage epochs we can write
\begin{align*}
&\e_u\left(e^{-\a \tauR_0^-+\b X_{\tauR_0^-}};\tauR_0^-<\infty\right)=
\e\left(e^{-\a T_{\widehat N_u}+\b (u+\widehat S_{\widehat N_u})};\widehat N_u<\infty\right)\\
&=\e\left(e^{-\a(T^U+G_{N_{u+U}})+\b (u+U+S_{N_{u+U}})};N_{u+U}<\infty\right)\\
&=\e\e_{u+U}\left(e^{-\a (T^U+\tau_0^-)+\b X_{\tau_0^-}};\tau_0^-<\infty\right)\e e^{-\a T^D+\b D},
\end{align*}
where in the last line we applied the strong Markov property at $\tau_0^-$. Identity \eqref{eq:exit1b} can be derived analogously.  
\end{proof}

\section{Further exit problems}
\label{sec:exit2}
\subsection{Two-sided exit with different observation types}\label{sec:two_sided}
In this section we consider two-sided exit problems with one continuous and one Poisson exit at the boundaries. It turns out that there is a simple relation between the problems when the roles of the continuous and the Poisson exit are interchanged, i.e. problems corresponding to $\{\tau_0^-<\tauR_a^+\}$ and $\{\tauR_0^-<\tau_a^+\}$. Here we extend the ideas of Section~\ref{sec:exit1} to their full potential.
\begin{prop}\label{prop:exit2}
For $a\geq u\geq 0$ and $\a,\b\geq 0$ it holds that
\begin{align}
\label{eq:exit2a}&\e_u\left(e^{-\a \tauR_0^-+\b X_{\tauR_0^-}};\tauR_0^-<\tau_a^+\right)\\
&\qquad =\e\left[e^{-\a T^U}\e_{u+U}\left(e^{-\a \tau_0^-+\b X_{\tau_0^-}};\tau_0^-<\tauR_a^+\right)\right]\e e^{-\a T^D+\b D},\nonumber\\
\label{eq:exit2b}&\e_{u}\left(e^{-\a \tau_0^-+\b X_{\tau_0^-}};\tau_0^-<\tauR_a^+\right)\e e^{-\a T^D+\b D}\\&\qquad =\e\left[e^{-\a T^D}\e_{u+D}\left(e^{-\a \tauR_0^-+\b X_{\tauR_0^-}};\tauR_0^-<\tau_a^+\right)\right].\nonumber
\end{align}
\end{prop}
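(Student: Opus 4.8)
The plan is to reuse verbatim the machinery of Proposition~\ref{prop:exit1}, adding a second boundary as an extra stopping condition. The only conceptual novelty is that the continuous upper barrier $a$ for the black (Poisson) points becomes a Poisson-type upper barrier $\widehat\tau_a^+$ for the grey (continuous) points after the embedding shift, and vice versa. Recall the identification of the two sequences of dots: the black dots sit at heights $u+\widehat S_i$ at times $T_i$, and the grey dots sit at heights $u+S_i$ at times $G_i$, with the two distributional identities
\begin{align*}
((\widehat S_i+D,T_i+T^D),i\geq 0)&\eqd((S_i,G_i),i\geq 0),\\
((\widehat S_i,T_i),i\geq 1)&\eqd((S_i+U,G_i+T^U),i\geq 0).
\end{align*}

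First I would express the left-hand quantity $\e_u(e^{-\a \tauR_0^-+\b X_{\tauR_0^-}};\tauR_0^-<\tau_a^+)$ in terms of the black dots. The key observation is that for the black (Poisson-observed) sequence $u+\widehat S_i$, the event $\tauR_0^-<\tau_a^+$ means that the first black dot to go below $0$ does so \emph{before} the continuous path crosses the upper level $a$. Because the black dots are the values $X_{T_i}$ sampled at Poisson epochs, while the continuous supremum between epochs is what triggers $\tau_a^+$, I would need to track the running continuous supremum of the path. The cleanest route is to note that $\tau_a^+$ is governed by the suprema $\overline X$ on the excursions between observation epochs, which by the Wiener-Hopf factorization are exactly the $U$-type increments; so the condition $\tauR_0^-<\tau_a^+$ on the black side corresponds, after the shift by $(D,T^D)$, to the condition $\tau_0^-<\tauR_a^+$ on the grey side, where now the \emph{upper} exit is detected only at the grey (continuous) observation epochs $G_i$—i.e.\ Poisson-type for the upper barrier.

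The main substantive step is to verify that this interchange of observation type is exactly what the embedding produces: shifting the picture to start at the first grey point turns continuous monitoring of the lower level into Poisson monitoring and Poisson monitoring of the upper level into continuous monitoring. Concretely, I would introduce the first-passage indices $\widehat N_u$ and $N_u$ as in the previous proof, but now intersected with the requirement that no earlier dot (of the appropriate color) has exceeded $a$; then apply the two distributional identities above to transport the event and the $(e^{-\a t+\b x})$-weight from the black sequence to the shifted grey sequence, and finally invoke the strong Markov property at $\tau_0^-$ to factor out the $\e\,e^{-\a T^D+\b D}$ term and the $e^{-\a T^U}$ term. Identity~\eqref{eq:exit2b} follows by the symmetric computation, shifting instead by $(U,T^U)$.

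The hard part will be bookkeeping the upper-barrier event correctly under the two embeddings: I must be sure that crossing $a$ between two black dots (continuous upper exit, contributing to $\tau_a^+$) is mapped precisely to crossing $a$ at a grey dot (Poisson upper exit, contributing to $\widehat\tau_a^+$), with no off-by-one error in which excursion's supremum is counted and no double-counting at the initial or terminal epoch. Once the event translation is pinned down, the algebra is identical to that of Proposition~\ref{prop:exit1}, so the real content is the geometric claim that the continuous/Poisson roles at the two levels swap under the shift by $(D,T^D)$.
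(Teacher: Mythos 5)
Your overall strategy is the paper's: embed the relevant points via the Wiener--Hopf factorization, argue that the two interlaced point patterns (one per side of the identity) have the same law up to the initial point, and finish with the strong Markov property at $\tau_0^-$ to produce the factor $\e e^{-\a T^D+\b D}$. The key geometric claim you isolate --- that the continuous/Poisson roles at the two levels swap under the shift --- is exactly what the paper's ``proof by inspection'' establishes.

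One point in your plan needs repair before it goes through, and it is precisely at the spot you flag as ``the hard part.'' You cannot reuse the coupling of Proposition~\ref{prop:exit1} verbatim: in that (infimum-first) embedding the $U_i$-increment of period $i$ is $X_{T_i}-\inf\{X_t:t\in[T_{i-1},T_i]\}$, which is \emph{distributed} as the excursion supremum but is not equal to it, so the running supremum that triggers $\tau_a^+$ is not a function of the embedded points $u+\widehat S_i$, $u+S_i$. For the left-hand side of \eqref{eq:exit2a} you must switch to the dual, supremum-first decomposition of each inter-observation period (observation $\to$ supremum by $(U,T^U)$, supremum $\to$ next observation by $(D,T^D)$), so that the suprema themselves become embedded points; the right-hand side keeps the infimum-first decomposition. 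The content of the proof is then that both decompositions generate the same alternating $+(U,T^U)/+(D,T^D)$ pattern, differing only in the starting point and in which colour is an observation and which an extremum. With that in place your off-by-one worry resolves cleanly: within each period the extremum is attained no later than the closing observation epoch, so on both sides the event reads ``first grey point below $0$ precedes the first black point above $a$'' in the same interlaced order, and the extra initial grey point $u\geq 0$ on one side is harmless. A minor slip: your shifts are interchanged --- \eqref{eq:exit2a} uses the $(U,T^U)$-shift (its right-hand side starts at $u+U$) and \eqref{eq:exit2b} the $(D,T^D)$-shift.
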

\begin{proof} The proof is by inspection: For \eqref{eq:exit2a}, consider the embeddings illustrated in Figure~\ref{fig:exit2}. In the left picture the grey dots correspond to the observations and the black to the suprema in between two observations. In the right picture the black dots correspond to observations and the grey to the infima in between observations.
Note that the position of a black point with respect to the previous grey point has the same distribution in both cases, namely $(U,T^U)$. The same is true for the position of the grey points with respect to their previous black points with $(D,T^D)$. So the patterns of points in each case have the same law up to a certain shifting; we illustrate this by using the same patterns of points in both pictures in Figure~\ref{fig:exit2} and by drawing different sample paths.
Now it follows that $\p_u(\tauR_0^-<\tau^+_a)$, see the left picture, must coincide with $\e\p_{u+U}(\tau_0^-<\tauR_a^+)$, see the right picture, because the interpretation of points was 'reversed'. 
Finally, we include the value of $X$ at first passage and its time using the strong Markov property at $\tau_0^-$ as in the proof of Proposition~\ref{prop:exit1}.
The same type of reasoning yields~\eqref{eq:exit2b}.
\end{proof}
\begin{figure}[h]
\centering
\includegraphics{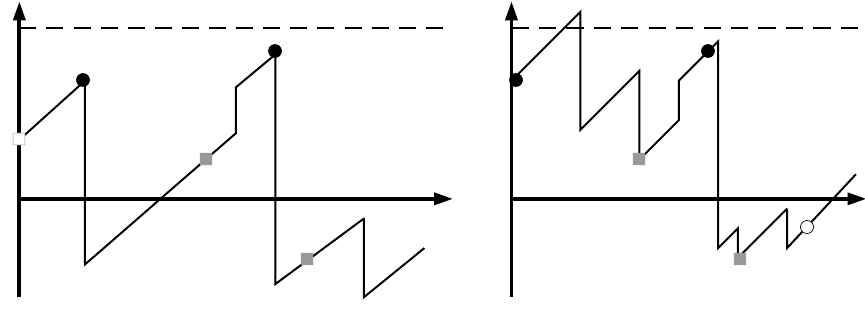}
\caption{Poisson observations are in grey (left picture) and in black (right picture)}
\label{fig:exit2}
\end{figure}

By considering the negative of $X$ we immediately obtain the following result from Proposition~\ref{prop:exit2}.
\begin{cor}
\label{cor:exit2}
For $a\geq u\geq 0$ and $\a,\b\geq 0$ it holds that
\begin{align}
&\e_u\left(e^{-\a \tauR_a^+-\b (X_{\tauR_a^+}-a)};\tauR_a^+<\tau_0^-\right)\\&\qquad =\e\left[e^{-\a T^D}\e_{u+D}\left(e^{-\a \tau_a^+-\b (X_{\tau_a^+}-a)};\tau_a^+<\tauR_0^-\right)\right]\e e^{-\a T^U-\b U},\nonumber\\
&\e_{u}\left(e^{-\a \tau_a^+-\b (X_{\tau_a^+}-a)};\tau_a^+<\tauR_0^-\right)\e e^{-\a T^U-\b U}\\
&\qquad =\e\left[e^{-\a T^U}\e_{u+U}\left(e^{-\a \tauR_a^+-\b (X_{\tauR_a^+}-a)};\tauR_a^+<\tau_0^-\right)\right].\nonumber
\end{align}
\end{cor}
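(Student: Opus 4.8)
The plan is to deduce the Corollary from Proposition~\ref{prop:exit2} by applying that proposition to the spatially reflected process $Y_t = a - X_t$. This $Y$ is again a \levy process (its increments are those of $-X$, so invoking ``the negative of $X$'' amounts to the reflection $x\mapsto a-x$ that keeps the strip $[0,a]$ fixed), and reflection interchanges both the two boundaries $0$ and $a$ and the two observation mechanisms. Starting $X$ at $u$ corresponds to starting $Y$ at $a-u$, and the hypothesis $a\geq u\geq 0$ is equivalent to $a\geq a-u\geq 0$, which is exactly what is needed to invoke Proposition~\ref{prop:exit2} for $Y$.

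First I would set up the dictionary between the exit quantities of $X$ and of $Y$. Since $X_t>a\iff Y_t<0$ and $X_t<0\iff Y_t>a$, and since the Poisson epochs $\T_i$ are common to both processes, one has $\tau^+_a(X)=\tau^-_0(Y)$, $\tauR^+_a(X)=\tauR^-_0(Y)$, $\tau^-_0(X)=\tau^+_a(Y)$, and $\tauR^-_0(X)=\tauR^+_a(Y)$; in particular the event $\{\tauR^+_a(X)<\tau^-_0(X)\}$ becomes $\{\tauR^-_0(Y)<\tau^+_a(Y)\}$. Writing $X=a-Y$ yields the overshoot identity $X_{\tauR^+_a}-a=-Y_{\tauR^-_0(Y)}$, so the exponent $-\b(X_{\tauR^+_a}-a)$ of the Corollary becomes $+\b\,Y_{\tauR^-_0(Y)}$. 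Consequently the left-hand side of the first Corollary identity equals $\e_{a-u}\!\left(e^{-\a\tauR^-_0+\b Y_{\tauR^-_0}};\tauR^-_0<\tau^+_a\right)$ computed for $Y$, which is precisely the left-hand side of \eqref{eq:exit2a} for $Y$.

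Next I would record how the Wiener--Hopf ingredients transform. Because $\underline{(-X)}_{\T_1}=-\oX_{\T_1}$ and $\overline{(-X)}_{\T_1}=-\uX_{\T_1}$, with the associated extremal times matched through the dual factorization recalled in the Preliminaries, the pairs attached to $Y$ satisfy $(D_Y,T^{D_Y})\eqd(-U,T^U)$ and $(U_Y,T^{U_Y})\eqd(-D,T^D)$. Substituting these into \eqref{eq:exit2a} written for $Y$ turns the prefactor $\e\,e^{-\a T^{D_Y}+\b D_Y}$ into $\e\,e^{-\a T^U-\b U}$, turns the outer weight $e^{-\a T^{U_Y}}$ into $e^{-\a T^D}$, and turns the shifted starting level $(a-u)+U_Y$ into $a-(u+D)$, i.e.\ the point at which $X$ starts at $u+D$. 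Translating the inner expectation back through the dictionary reproduces exactly the first identity of the Corollary; applying the same procedure to \eqref{eq:exit2b} yields the second.

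The computation is routine once the dictionary is fixed, so the only genuine obstacle is the bookkeeping: one must consistently carry the sign flips in the overshoot exponents (the switch between $-\b$ and $+\b$), the exchanges $D\leftrightarrow -U$ and $T^D\leftrightarrow T^U$ in every factor, and the resulting change of the shifted initial level from $u+U$ to $u+D$. Particular care is needed to confirm that the two mutually dual Wiener--Hopf factorizations from the Preliminaries are invoked with matching orientations, so that after reflection the time components $T^D,T^U$ pair correctly with the spatial components $D,U$ (the first-versus-last convention for the extremal times being the one point where an inattentive argument could go astray).
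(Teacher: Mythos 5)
Your proof is correct and is essentially the paper's own argument: the paper obtains the corollary ``by considering the negative of $X$'' applied to Proposition~\ref{prop:exit2}, which is exactly the reflection $Y=a-X$ and the swap $(D_Y,T^{D_Y})\eqd(-U,T^U)$, $(U_Y,T^{U_Y})\eqd(-D,T^D)$ that you carry out. You simply make explicit the bookkeeping (overshoot signs, shifted initial level, first-versus-last extremal time convention) that the paper leaves to the reader.
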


\subsection{Reflected processes}
\label{sec:refl}
In this section we consider the process $X$ reflected at a barrier $a>0$ in a continuous and Poisson manner, and study its first passage below 0 in Poisson and continuous manner respectively (with always opposite manners). 
Again, these two problems are closely related.
Note that in an insurance context reflection at $a$ results when paying out dividends according to a {\em barrier strategy}, either continuously or at Poisson epochs (see e.g.~\cite{albrecher2011randomized}).

Let $\pRef_u$ be the law of $X$ started in $u$ and continuously reflected at $a$ and let $R$ be the corresponding regulator, i.e. $(X_t,R_t),t\geq 0$ under $\pRef_u$ is
\begin{align*}(X_t-(\overline X_t-a)^+,(\overline X_t-a)^+),t\geq 0 \text{ under }\p_u.\end{align*} Similarly, let $\pRefR_u$ be the law of $X$ started at $u$ and reflected in Poisson manner at $a$, i.e. $(X_t,R_t),t\geq 0$ under $\pRefR_u$ is 
\begin{align*}(X_t-(\max\{X_{T_i}:T_i\leq t\}-a)^+,(\max\{X_{T_i}:T_i\leq t\}-a)^+),t\geq 0 \text{ under }\p_u.
\end{align*}

\begin{prop}\label{prop:reflected}
For $a\geq u\geq 0$ and $\a,\b,\gamma\geq 0$ it holds that
\begin{align*}
&\eRef_u \left(e^{-\a\tauR_0^-+\b X_{\tauR_0^-}-\gamma R_{\tauR_0^-}};\tauR_0^-<\infty\right)\\&\qquad =\e\left[e^{-\a T^U}\eRefR_{u+U} \left( e^{-\a\tau_0^-+\b X_{\tau_0^-}-\gamma R_{\tau_0^-}};\tau_0^-<\infty\right)\right]\e e^{-\a T^D+\beta D},\nonumber\\
&\eRefR_{u} \left( e^{-\a\tau_0^-+\b X_{\tau_0^-}-\gamma R_{\tau_0^-}};\tau_0^-<\infty\right)\e e^{-\a T^D+\beta D}\\
&\qquad =\e\left[e^{-\a T^D}\eRef_{u+D} \left( e^{-\a\tauR_0^-+\b X_{\tauR_0^-}-\gamma R_{\tauR_0^-}};\tauR_0^-<\infty\right)\right].\nonumber
\end{align*}
\end{prop}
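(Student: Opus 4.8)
The plan is to follow the proof of Proposition~\ref{prop:exit2} almost verbatim, replacing \emph{killing} at the upper level~$a$ by \emph{reflection} at~$a$ and carrying the regulator along through the weight $e^{-\gamma R}$. The key structural observation is that in \eqref{eq:exit2a} the constraint $\tau_a^+$ stops the path as soon as a supremum between two observations -- a black point in the left picture of Figure~\ref{fig:exit2} -- first exceeds~$a$, while $\tauR_a^+$ stops it as soon as an observation -- a black point in the right picture -- first exceeds~$a$. Continuous (resp.\ Poisson) reflection at~$a$ does not stop the path but instead caps it at these very same black points and stores the excess in~$R$. I would therefore reuse the two embeddings of Figure~\ref{fig:exit2} unchanged, together with the matching of the two point patterns established there: the height and time of a black point relative to the preceding grey point is $(U,T^U)$, and that of a grey point relative to the preceding black point is $(D,T^D)$, in both pictures.

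The one new ingredient is the regulator, and the crux is to see that both reflection schemes are the \emph{same} deterministic functional of the interleaved sequence of point heights. Continuous reflection yields $R_t=(\overline X_t-a)^+$, and at an observation epoch $\overline X_t$ is the running maximum of the suprema, i.e.\ of the black points in the left picture; Poisson reflection yields $R_t=(\max\{X_{T_i}:T_i\le t\}-a)^+$, the running maximum of the observations, i.e.\ of the black points in the right picture. In either picture the reflected height of a grey point equals its raw height minus $(\,\text{running maximum of the preceding black points}-a)^+$, and $R$ at that grey point equals the same nonnegative quantity. Since the joint law of the interleaved black/grey heights and inter-point times coincides across the two pictures up to the shift $u\mapsto u+U$ and the time offset $T^U$ -- precisely the coincidence exploited in Proposition~\ref{prop:exit2} -- the reflected grey-point heights, the first index at which such a height drops below~$0$, and the regulator accumulated up to that index all agree in law. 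This identifies $\eRef_u$ on the left with $\e\,\eRefR_{u+U}$ on the right (with the time factor $e^{-\a T^U}$), now with the common weight $e^{-\gamma R}$ attached.

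To complete the first identity I would reinstate the exact value and time of the first passage below~$0$ by the strong Markov property at $\tau_0^-$, exactly as in the proofs of Propositions~\ref{prop:exit1} and~\ref{prop:exit2}: the first grey point to fall below~$0$ sits an amount $(D,T^D)$ below the preceding black point, which produces the outer factor $\e\,e^{-\a T^D+\b D}$, while $R$ is already frozen at its value at $\tau_0^-$ and is simply transported through this step. The second identity follows from the mirror embedding that begins the scan at a grey point, just as \eqref{eq:exit2b} follows from \eqref{eq:exit2a}. I expect the only real obstacle to be the bookkeeping for~$R$: one must verify carefully that ``capping at the running maximum of the black points'' is literally the same operation in both pictures -- that is, that continuous reflection (capping at the suprema) and Poisson reflection (capping at the observations) coincide once the black points are identified with one another -- and that the value of $R$ at the ruin epoch is carried correctly through the shift and the strong Markov step. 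The remaining pattern-matching is identical to that already used for Proposition~\ref{prop:exit2}.
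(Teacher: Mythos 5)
Your argument is essentially the paper's own proof: the paper likewise reuses the black/grey point-pattern matching from Propositions~\ref{prop:exit1} and~\ref{prop:exit2} (with the sup-first Wiener--Hopf decomposition), writes the two reflected sequences as the recursively capped sums $u,(u+U_1)\wedge a+D_1,((u+U_1)\wedge a+D_1+U_2)\wedge a+D_2,\ldots$ and $\widehat u\wedge a+D_1,(\widehat u\wedge a+D_1+U_1)\wedge a+D_2,\ldots$ with $\widehat u\eqd u+U$, notes that $R$ enters the transforms without change, and finishes with the strong Markov step at $\tau_0^-$. Your global "subtract the excess of the running maximum of the black points over $a$" description of both reflections is equivalent to the paper's recursive capping, so the proposal is correct and follows the same route.
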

\begin{proof} Again, the proof follows merely by inspection in a similar way as for the previous results. The first relation can be seen from Figure~\ref{fig:exit_ref},
where the left picture depicts continuous reflection at~$a$ and Poisson observation at~0, and the right picture depicts the corresponding (shifted) Poisson reflection at~$a$ and continuous observation at~0.
In the left picture Poisson observations yield the sequence:
$u,(u+U_1)\wedge a+D_1,((u+U_1)\wedge a+D_1+U_2)\wedge a+D_2,\ldots$. In the right picture the infima in between Poisson reflection epochs are given by $\widehat u\wedge a+D_1,(\widehat u\wedge a+D_1+U_1)\wedge a+D_2,\ldots$, where we choose $\widehat u$ to be distributed as $u+U$. 
These sequences can be complemented with the respective times as in the proof of Proposition~\ref{prop:exit1}. Finally, it is easy to see that $R$ enters the transforms without requiring any changes. The second relation follows accordingly.
\end{proof}
\begin{figure}[h!]
\centering
\includegraphics{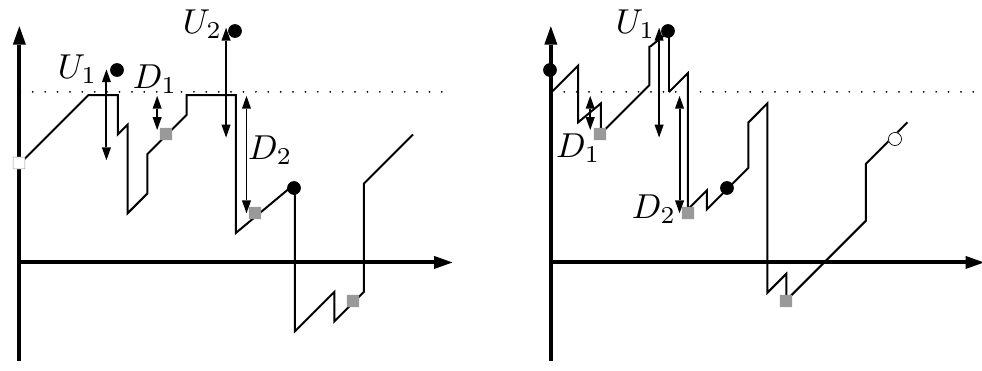}
\caption{Continuous reflection and Poisson exit (left), and Poisson reflection and continuous exit (right)}
\label{fig:exit_ref}
\end{figure}

\begin{remark}\normalfont
It is easy to see that Proposition~\ref{prop:reflected} can be generalized from reflection to so-called refraction. Concretely, consider the processes $X_t-\delta (\overline X_t-a)^+$ and  $X_t-\delta (\max\{X_{T_i}:T_i\leq t\}-a)^+$ for $\delta\in [0,1]$. In the insurance context such a refraction has the interpretation of taxation according to a loss-carry-forward scheme and tax rate $\delta$, see e.g.~\cite{tax_levy}.
\end{remark}
%


\subsection{Two-sided Poisson exit}\label{3.3}
The two-sided exit with Poisson observation at both barriers can be related to a model with another type of exit time. 
Define the random time $\widetilde \tau_a^+$ of the first observation such that the process has stayed above $a$ during the entire preceding inter-observation period, i.e. $\widetilde \tau_a^+=T_{\widetilde N_a^+}$ with $\widetilde N_a^+:=\min\{i\in\mathbb N:\inf\{X_t,t\in[T_{i-1},T_i]\}>a\}$. Similarly, define $\widetilde \tau_0^-=T_{\widetilde N_0^-}$ with $\widetilde N_0^-:=\min\{i\in\mathbb N:\sup\{X_t,t\in[T_{i-1},T_i]\}<0\}$ as the first observation time such that the process has stayed below $0$ during the entire preceding inter-observation period.
Then for $u\in[0,a]$ it holds that
\begin{equation}\label{eq:same}\p_u(\tauR_0^-<\tauR_a^+)=\e\p_{u+U}(\tau_0^-<\widetilde \tau_a^+)=\e\p_{u+D}(\widetilde\tau_0^-<\tau_a^+).\end{equation}
To see this, one follows the same ideas as above: for the first equality consider infima in between two observations, see Figure~\ref{fig:exit2}, and for the second equality consider suprema in between two observations. 
Similarly, we also have the reverse identities:
\begin{align}
&\p_{u}(\tau_0^-<\widetilde \tau_a^+)=\e\p_{u+D}(\tauR_0^-<\tauR_a^+), \nonumber\\
&\p_{u}(\widetilde\tau_0^-<\tau_a^+)=\e\p_{u+U}(\tauR_0^-<\tauR_a^+).\label{eq:parisian1}
\end{align} 

\subsection{Parisian ruin}\label{sec:parisian}
Parisian ruin is defined as the first time when an excursion of $X$ below 0 is longer than some time $V\geq 0$ (sometimes referred to as {\em implementation delay}). Whereas the classical definition is in terms of a deterministic $V$,   
for analytic tractability it is often assumed that $V$ is a random variable, and that an independent copy of $V$ is assigned to each excursion, see e.g. \cite{loeffen2013parisian} and \cite{lrz}.

Firstly, from the memoryless property it follows that the time $\tauR_0^-$ of Poisson ruin is also the time of Parisian ruin in the case where $V$ is an exponential random variable with rate~$\lambda$. 
Secondly, $\widetilde\tau_0^-$ as defined in Section \ref{3.3} is the time of Parisian ruin in the case where $V$ is Erlang$(2,\lambda)$ distributed (since the latter is the sum of two independent exponential variables).
Similarly to~\eqref{eq:exit1a}, Equation \ref{eq:parisian1} can easily be extended to
\begin{align}\label{eq:parisian_start}&\e_u\left(e^{-\a\widetilde\tau_0^-+\b X_{\widetilde\tau_0^-}};\widetilde\tau_0^-<\infty\right)\\
&\qquad=\e\left[e^{-\a T^U}\e_{u+U}\left(e^{-\a\tauR_0^-+\b X_{\tauR_0^-}};\tauR_0^-<\infty\right)\right]\e e^{-\a T^D+\b D},\nonumber\end{align}
which under the present interpretation relates Parisian ruin quantities with exponential and Erlang(2)-distributed implementation delay (here we took $a=\infty$ for simplicity).

More generally, consider Parisian ruin with Erlang$(k,\lambda)$ implementation delay and let $\tau_0^{(k)-}$ denote the corresponding ruin time. So in particular $\tau_0^{(0)-}=\tau_0^-$, $\tau_0^{(1)-}=\tauR_0^-$ and $\tau_0^{(2)-}=\widetilde\tau_0^-$. On the other hand, define $\tauR_0^{(k)-}$ as the first epoch $T_i, i\geq k$ such that $X(T_{i-k}),\ldots,X(T_{i})<0$, i.e. the process has been observed negative at the last $k+1$ Poisson epochs.
Then, along the same line of arguments, we can extend \eqref{eq:parisian_start} (cf. Figure \ref{fig:paris}):
\begin{figure}[h!]
\centering
\includegraphics{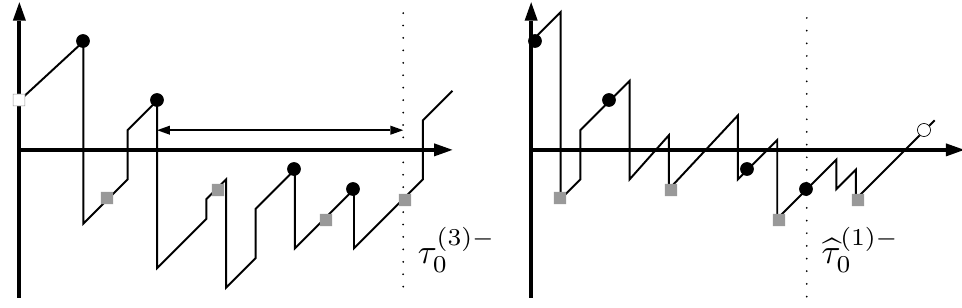}
\caption{Poisson observations are in grey (left picture) and in black (right picture)}
\label{fig:paris}
\end{figure}
\begin{prop}
For $k\geq 2$ and $u\geq 0$ we have 
\begin{multline}\label{eq:parisian}\e_u\left(e^{-\a\tau_0^{(k)-}+\b X_{\tau_0^{(k)-}}};\tau_0^{(k)-}<\infty\right)\\
\qquad=\e\left[e^{-\a T^U}\e_{u+U}\left(e^{-\a\tauR_0^{(k-2)-}+\b X_{\tauR_0^{(k-2)-}}};\tauR_0^{(k-2)-}<\infty\right)\right]\e e^{-\a T^D+\b D}.\end{multline}
and 
\begin{multline}\label{eq:parisian2}\e_u\left(e^{-\a\tauR_0^{(k-2)-}+\b X_{\tauR_0^{(k-2)-}}};\tauR_0^{(k-2)-}<\infty\right)\e e^{-\a T^D+\b D}\\
\qquad=\e\left[e^{-\a T^D}\e_{u+D}\left(e^{-\a\tau_0^{(k)-}+\b X_{\tau_0^{(k)-}}};\tau_0^{(k)-}<\infty\right)\right].\end{multline}
\end{prop}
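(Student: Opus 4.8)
The plan is to reduce both exit times to the same shape --- ``first epoch at which $k-1$ consecutive quantities fall below $0$'' --- and then to apply a Wiener--Hopf shift exactly as in Propositions~\ref{prop:survival} and~\ref{prop:exit1}, but with the infima between observations replaced by the corresponding suprema. The starting observation, already used implicitly for $k=2$ in \eqref{eq:parisian_start}, comes from writing the Erlang$(k,\l)$ delay as a sum of $k$ independent exponentials and invoking the memoryless property: Parisian ruin with Erlang$(k)$ delay occurs exactly when some excursion of $X$ below $0$ contains at least $k$ observation epochs, and $\tau_0^{(k)-}$ is the time of the $k$-th observation epoch inside the first such excursion. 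Writing $M_j:=\sup\{X_t:t\in[T_{j-1},T_j]\}$ for the supremum over the $j$-th inter-observation interval and noting that $k$ consecutive epochs $T_{i-k+1},\dots,T_i$ lie in one excursion iff $\max(M_{i-k+2},\dots,M_i)<0$, this says that $\tau_0^{(k)-}$ is the first epoch $T_i$ with $M_{i-k+2}<0,\dots,M_i<0$, i.e.\ the first time $k-1$ consecutive interval-suprema lie below $0$. On the other hand $\tauR_0^{(k-2)-}$ is, by definition, the first $T_i$ with $X_{T_{i-k+2}}<0,\dots,X_{T_i}<0$, i.e.\ the first time $k-1$ consecutive observation heights lie below $0$. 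Both thus have the same shape, the role of the observation heights on the right being played by the interval-suprema on the left.

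Next I would build the supremum analogue of the embedding from the proof of Proposition~\ref{prop:exit1}. Splitting each increment $X_{T_i}-X_{T_{i-1}}$ by the Wiener--Hopf factorization into its pre-supremum part $\widehat U_i$ and its post-supremum part $\widehat D_i$ (recording the durations as well), one has $M_i=X_{T_{i-1}}+\widehat U_i$ and $X_{T_i}=M_i+\widehat D_i$; by the factorization these pieces are independent across intervals, and within each interval the pre-supremum piece, with law $(U,T^U)$ in space and time, is independent of the post-supremum piece, with law $(D,T^D)$. Reading the resulting marked pattern from the first supremum $M_1$ onwards produces the alternating, down-then-up sequence $M_1,X_{T_1},M_2,X_{T_2},\dots$, whose down-steps $\widehat D_i$ and up-steps $\widehat U_{i+1}$ have precisely the joint law of the down-steps $D$ and up-steps $U$ of the observation/infimum pattern of Proposition~\ref{prop:exit1}. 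Since $M_1\eqd u+U$, this is the time-augmented analogue of \eqref{eq:S}--\eqref{eq:hatS}: the interval-suprema $M_i$ of $X$ started at $u$ correspond in law to the observation heights of the process started at $u+U$, the whole supremum pattern being a $(U,T^U)$-shift, in space and in time, of that observation pattern (cf.\ Figure~\ref{fig:paris}).

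It then remains to read off the marks. Under this identification, the event $\{M_{i-k+2}<0,\dots,M_i<0\}$ for $X$ started at $u$ becomes the event of $k-1$ consecutive observation heights below $0$ for the process started at $u+U$, so that $\tau_0^{(k)-}$ under $\p_u$ is matched with $\tauR_0^{(k-2)-}$ under $\p_{u+U}$. Tracking durations, the passage time splits as $\tau_0^{(k)-}=T_i=(\text{duration of the first up-step})+T'+(\text{duration of the last down-step})$, where $T'$ is the matched $\tauR_0^{(k-2)-}$. The first summand, together with the displacement $u\mapsto u+U$, comes from the initial up-step to $M_1$ and carries the joint mark $(U,T^U)$, producing the factor $e^{-\a T^U}$ inside the expectation over $U$. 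The last summand arises because $\tau_0^{(k)-}=T_i$ is the observation epoch following the last supremum $M_i$, so reaching it costs one further down-step from $M_i$ to $X_{T_i}=M_i+\widehat D_i$; by the factorization this final step has the joint law $(D,T^D)$ and is independent of everything preceding the matched $\tauR_0^{(k-2)-}$, hence factors out as $\e e^{-\a T^D+\b D}$ (the $\b D$ coming from $X_{T_i}=M_i+\widehat D_i$, with $M_i$ the value of the shifted process at its passage). Assembling the three pieces gives \eqref{eq:parisian}; running the same construction with the infimum decomposition (down-step first, shift by $(D,T^D)$) yields \eqref{eq:parisian2}.

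The step I expect to be the main obstacle is the alignment in the second paragraph: making the equality in law between the interval-supremum pattern and the $(U,T^U)$-shifted observation pattern fully rigorous, and getting the fencepost count right, namely that a window of $k-1$ consecutive suprema is carried exactly onto a window of $k-1$ consecutive observation heights once the extra up-step at the front and the extra down-step at the back have been stripped off, with the time marks splitting as above. Once this bookkeeping is settled, inserting the weights $e^{-\a\cdot+\b\cdot}$ and factoring off the two boundary steps is routine, just as in the proofs of Propositions~\ref{prop:survival} and~\ref{prop:exit1}.
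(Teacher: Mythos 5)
Your argument is correct and is precisely the one the paper intends (the paper only gestures at it via ``along the same line of arguments'' and Figure~\ref{fig:paris}): you identify $\tau_0^{(k)-}$ as the first epoch $T_i$ preceded by $k-1$ consecutive interval-suprema below $0$, match the supremum pattern with a $(U,T^U)$-shifted observation pattern via the Wiener--Hopf factorization, and peel off the initial up-step and the final down-step exactly as in Propositions~\ref{prop:survival} and~\ref{prop:exit1}. Your bookkeeping of the window of $k-1$ suprema versus $k-1$ observation heights, and of the time/space marks, checks out, so this is a correct and in fact more detailed write-up of the paper's own proof.
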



%
\section{The case of spectrally one-sided \levy processes}\label{sec:spectral1}
If $X$ is a one-sided \levy process, some of the identities lead to more explicit forms, and this will allow to retrieve a number of results previously obtained in the literature, now with alternative proofs, revealing some more structure of the formulas. 
Without loss of generality assume that $X$ is a spectrally-negative \levy process, i.e.~ it may only have negative jumps and it is not a non-increasing process. Consider its Laplace exponent
\[\psi(\theta)=\log\e e^{\theta X_1},\quad\theta\geq 0,\]
and put $\psi_\a(\theta)=\psi(\theta)-\a$ for $\a\geq 0$.

\subsection{Preliminaries}
Let us first recall some basic functions which play a fundamental role in exit theory, see e.g.~\cite[Ch.\ 8]{kyprianou}.
Let $\Phi_\a$ be the largest (non-negative) zero of $\psi_\a$, and let $W_\a(u),u\geq 0$ be the so-called scale function: a continuous non-negative function determined by its Laplace transform $\int_0^\infty e^{-\theta u}W_\a(u)\D u=1/\psi_\a(\theta),\theta>\Phi_\a$. In addition, we need a second scale function 
\[Z_\a(u,\theta)=e^{\t u}\left(1-\psi_\a(\t)\int_0^u e^{-\t y}W_\a(y)\D y\right),\quad  u\geq 0,\]
which can be rewritten as 
\begin{equation}\label{eq:ZZ}Z_\a(u,\theta)=\psi_\a(\theta)\int_0^\infty e^{-\theta y}W_\a(u+y)\D y\end{equation}
for $\theta>\Phi_\a$, see also~\cite{poisson_exit}.
The two basic one-sided exit identities under continuous observation are 
\begin{align}
\label{eq:exita}&\e_u e^{-\a\tau_a^+}=e^{-\Phi_\a(a-u)},\quad u\leq a\\
\label{eq:exit0}&\e_u \left(e^{-\a\tau_0^-+\b X(\tau_0^-)},\tau_0^-<\infty\right)=Z_\a(u,\b)-W_\a(u)\frac{\psi_\a(\b)}{\b-\Phi_\a},\quad u\geq 0,
\end{align}

and the Wiener-Hopf factors are given by
\begin{align*}
&\e e^{-\a T^U-\b U}=\frac{\Phi_\l}{\Phi_{\l+\a}+\b}, &\e e^{-\a T^D+\b D}=\frac{\l}{\Phi_\l}\frac{\Phi_{\l+\a}-\b}{\l-\psi_\a(\beta)},
\end{align*}
see e.g. ~\cite[Ch.\ 8]{kyprianou}.
In order to apply Formula \eqref{eq:exit1a} of Proposition~\ref{prop:exit1} to~\eqref{eq:exit0}, we first need the following identities:
\begin{lemma}\label{lem:WZ}
For $u\geq 0$ it holds that
\begin{align*}&\e \left(e^{-\a T^U}W_\a(u+U)\right)=\frac{\Phi_{\l}}{\l}Z_\a(u,\Phi_{\l+\a}),\\
&\e \left(e^{-\a T^U}Z_\a(u+U,\b)\right)=\frac{\Phi_\l}{\Phi_{\l+\a}-\b}\left(Z_\a(u,\b)-\frac{\psi_\a(\b)}{\l}Z_\a(u,\Phi_{\l+\a})\right).
\end{align*}
\end{lemma}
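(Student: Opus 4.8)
The plan is to compute both expectations directly by exploiting the integral representation \eqref{eq:ZZ} of the second scale function, which rewrites $Z_\a(u,\b)$ as a weighted integral of $W_\a(u+y)$. This suggests that the fundamental object to understand is the quantity $\e\left(e^{-\a T^U}W_\a(u+U)\right)$, and that once this first identity is established the second will follow by integrating against $e^{-\b y}$ in $y$ after shifting $u\mapsto u+y$. So I would prove the first line of the lemma first and then reduce the second line to it.

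For the first identity, I would start from the Wiener--Hopf factor $\e e^{-\a T^U-\b U}=\Phi_\l/(\Phi_{\l+\a}+\b)$ given in the preliminaries. Reading this as a bivariate transform in $(T^U,U)$, I would recover the information needed to evaluate $\e\left(e^{-\a T^U}W_\a(u+U)\right)$ by taking the Laplace transform of the target in the variable $u$ and matching. Concretely, the idea is to integrate $\e\left(e^{-\a T^U}W_\a(u+U)\right)$ against $e^{-\t u}\,\D u$ over $u\ge 0$, interchange expectation and integration (justified since all integrands are non-negative), and use $\int_0^\infty e^{-\t u}W_\a(u+U)\,\D u = e^{\t U}\bigl(1/\psi_\a(\t) - \int_0^U e^{-\t y}W_\a(y)\,\D y\bigr)$ together with $\e\left(e^{-\a T^U}e^{\t U}\right)=\Phi_\l/(\Phi_{\l+\a}-\t)$ (the Wiener--Hopf factor evaluated at $\b=-\t$, valid for $\t$ in the appropriate strip). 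The claim is then that the resulting expression in $\t$ equals the Laplace transform of $\tfrac{\Phi_\l}{\l}Z_\a(u,\Phi_{\l+\a})$, which by \eqref{eq:ZZ} is $\tfrac{\Phi_\l}{\l}\cdot\tfrac{\psi_\a(\Phi_{\l+\a})}{\t-\Phi_{\l+\a}}$ up to the standard manipulation; uniqueness of Laplace transforms then gives the identity. The key algebraic input is $\psi_\a(\Phi_{\l+\a})=\l$, which follows from the definition of $\Phi_{\l+\a}$ as a zero of $\psi_{\l+\a}=\psi-(\l+\a)$ and $\psi_\a=\psi-\a$.

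Having established the first line, I would derive the second by writing $Z_\a(u+U,\b)=\psi_\a(\b)\int_0^\infty e^{-\b y}W_\a(u+U+y)\,\D y$ from \eqref{eq:ZZ}, multiplying by $e^{-\a T^U}$, taking expectations, and interchanging with the $y$-integral. Inside the integral I would apply the first identity with $u$ replaced by $u+y$, obtaining $\psi_\a(\b)\int_0^\infty e^{-\b y}\tfrac{\Phi_\l}{\l}Z_\a(u+y,\Phi_{\l+\a})\,\D y$. The remaining task is the one-variable computation $\int_0^\infty e^{-\b y}Z_\a(u+y,\Phi_{\l+\a})\,\D y$, which I would again evaluate via \eqref{eq:ZZ} as a double integral of $W_\a$ and reorganize, or alternatively by using the explicit first-form definition of $Z_\a$. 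The target right-hand side, $\tfrac{\Phi_\l}{\Phi_{\l+\a}-\b}\bigl(Z_\a(u,\b)-\tfrac{\psi_\a(\b)}{\l}Z_\a(u,\Phi_{\l+\a})\bigr)$, has exactly the shape of a partial-fraction decomposition in the two parameters $\b$ and $\Phi_{\l+\a}$, so the computation should collapse cleanly once the $\psi_\a(\Phi_{\l+\a})=\l$ substitution is used.

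I expect the main obstacle to be bookkeeping around the region of validity in $\t$ (resp. $\b$): the Laplace-transform representations and the Wiener--Hopf factors hold for parameters exceeding $\Phi_\a$ or $\Phi_{\l+\a}$, and the expressions contain the factor $1/(\Phi_{\l+\a}-\b)$, so I must ensure the manipulations are first carried out in a parameter strip where everything converges and then extended by analytic continuation to the stated range. A secondary technical point is justifying the interchange of expectation with the $u$- and $y$-integrals; since $W_\a\ge 0$ and all exponential weights are non-negative, Tonelli's theorem handles this, but I would note it explicitly. Apart from these, the argument is a direct transform computation driven entirely by \eqref{eq:ZZ}, the two Wiener--Hopf factors, and the relation $\psi_\a(\Phi_{\l+\a})=\l$.
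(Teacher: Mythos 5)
Your overall architecture matches the paper's: both identities are meant to follow from \eqref{eq:ZZ}, the Wiener--Hopf factor, and $\psi_\a(\Phi_{\l+\a})=\l$, and your reduction of the second identity to the first (shift $u\mapsto u+y$, integrate against $\psi_\a(\b)e^{-\b y}\D y$, evaluate the resulting one-variable integral of $Z_\a$ via \eqref{eq:ZZ}, and extend analytically in $\b$) is sound and essentially the paper's computation. The problem is in your derivation of the first identity. After writing
\[
\int_0^\infty e^{-\t u}\,\e\bigl(e^{-\a T^U}W_\a(u+U)\bigr)\D u
=\e\Bigl[e^{-\a T^U}e^{\t U}\Bigl(\tfrac{1}{\psi_\a(\t)}-\int_0^U e^{-\t y}W_\a(y)\D y\Bigr)\Bigr],
\]
the factor $\e(e^{-\a T^U}e^{\t U})=\Phi_\l/(\Phi_{\l+\a}-\t)$ only disposes of the first summand. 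In the second summand $U$ appears both in $e^{\t U}$ and as the upper limit of integration, so the term is not a product of $e^{-\a T^U+\t U}$ with anything independent and cannot be evaluated from the scalar Wiener--Hopf factor alone. Worse, the natural manipulation is circular: splitting $\int_0^U=\int_0^\infty-\int_U^\infty$ and substituting $y=U+z$ turns this term into $\tfrac{1}{\psi_\a(\t)}\e(e^{-\a T^U+\t U})-\int_0^\infty e^{-\t z}\e\bigl(e^{-\a T^U}W_\a(U+z)\bigr)\D z$, which reproduces exactly the unknown transform you set out to compute, and the proposed ``matching'' collapses to a tautology.

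The missing idea is to invert the Wiener--Hopf factor rather than use it as a scalar: the finite measure $\e(e^{-\a T^U};U\in\D y)$ has Laplace transform $\Phi_\l/(\Phi_{\l+\a}+\b)$ and is therefore the exponential density $\Phi_\l e^{-\Phi_{\l+\a}y}\D y$. This is precisely the paper's first step (its equation \eqref{expp}), and once it is in hand the whole Laplace-transform-in-$u$ detour is unnecessary:
\[
\e\bigl(e^{-\a T^U}W_\a(u+U)\bigr)=\Phi_\l\int_0^\infty e^{-\Phi_{\l+\a}y}W_\a(u+y)\D y=\frac{\Phi_\l}{\psi_\a(\Phi_{\l+\a})}Z_\a(u,\Phi_{\l+\a}),
\]
directly by \eqref{eq:ZZ}, with $\psi_\a(\Phi_{\l+\a})=\l$ finishing the first identity; your plan for the second identity then goes through unchanged. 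The gap is small and your remark about ``recovering the information'' from the bivariate transform gestures at the fix, but as written the first computation cannot be closed with the ingredients you list.
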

\begin{proof}
Firstly, 
\begin{equation}
 \e (e^{-\a T^U};U\in \D y)=\Phi_\l e^{-\Phi_{\l+\a}y}\D y,\label{expp}
\end{equation}
which can be checked by taking transforms and comparing to the Wiener-Hopf factor.
Hence
\[\e \left(e^{-\a T^U}W_\a(u+U)\right)=\int_0^\infty W_\a(u+y)\Phi_\l e^{-\Phi_{\l+\a} y}\D y=\frac{\Phi_\l}{\psi_\a(\Phi_{\l+\a})}Z_\a(u,\Phi_{\l+\a})\] according to~\eqref{eq:ZZ}. For the first identity it is left to note that $\psi_\a(\Phi_{\l+\a})=\l$.

Using~\eqref{eq:ZZ} several times we can write for $\mu>\b>\Phi_\a$:
\begin{align*}
&\int_0^\infty Z_\a(u+x,\b)e^{-\mu x}\D x=\psi_\a(\b)\int_0^\infty e^{-(\mu-\b) x}\int_0^\infty e^{-\b (y+x)}W_\a(u+y+x)\D y\D x\\
&=\int_0^\infty e^{-(\mu-\b) x}\psi_\a(\b)\int_x^\infty e^{-\b y}W_\a(u+y)\D y\D x\\
&=\int_0^\infty e^{-(\mu-\b) x}\left(Z_\a(u,\b)-\psi_\a(\b)\int_0^x e^{-\b y}W_\a(u+y)\D y\right)\D x\\
&=Z_\a(u,\b)\frac{1}{\mu-\b}-\psi_\a(\b)\frac{1}{\mu-\b}\int_0^\infty e^{-\mu y}W_\a(u+y)\D y\\
&=\frac{1}{\mu-\b}\left(Z_\a(u,\b)-\frac{\psi_\a(\b)}{\psi_\a(\mu)}Z_\a(u,\mu)\right).
\end{align*}
Plugging in $\mu=\Phi_{\l+\a}$ and multiplying by $\Phi_\l$ we obtain the second identity. By analytic extension $\b\geq 0$ can be chosen arbitrarily.
\end{proof}

\subsection{One-sided exit}
Assume that $\e X_1=\psi'(0)>0$ and consider the survival probabilities \eqref{surv}.
It is well known that $\phi(u)=\psi'(0)W_0(u)$, see also~\eqref{eq:exit0}. According to Proposition~\ref{prop:survival} and Lemma~\ref{lem:WZ} we have
\[\phiR(u)=\e\phi(u+U)=\psi'(0)\e W_0(u+U)=\psi'(0)\frac{\Phi_\l}{\l}Z_0(u,\Phi_\l),\]
which is Corollary 1 of ~\cite{landriault_occupation}. 

\begin{remark}\normalfont
Note that due to \eqref{expp}, for the spectrally negative \levy process the identity \eqref{rel1} simplifies to the pleasant form
\[\phiR(u)=\e\phi(u+e_{\Phi_\l}),\]
where $e_{\Phi_\l}$ is an exponential random variable with parameter ${\Phi_\l}$. This for instance immediately explains why for a compound Poisson process $X$ with exponential jump sizes the discrete Poisson observation changes the classical ruin probability formula just by a multiplicative factor $\Phi_\l/(\Phi_\l+R_0)$, where $R_0$ is the Lundberg adjustment coefficient (cf. \cite[Eq.2.18]{saj13}).  
\end{remark}

Using the standard identity~\eqref{eq:exit0}, Proposition~\ref{prop:exit1} and Lemma~\ref{lem:WZ} we obtain
\begin{align}
&\e_u \left(e^{-\a\tauR_0^-+\b X(\tauR_0^-)},\tauR_0^-<\infty\right)\label{eq:tauR0}\\&=\e\left(e^{-\a T^U}Z_\a(u+U,\b)-e^{-\a T^U}W_\a(u+U)\frac{\psi_\a(\b)}{\b-\Phi_\a}\right)\e e^{-\a T^D+\b D}\nonumber\\
&=\left(\frac{\Phi_\l}{\Phi_{\l+\a}-\b}\left(Z_\a(u,\b)-\frac{\psi_\a(\b)}{\l}Z_\a(u,\Phi_{\l+\a})\right)-\frac{\Phi_{\l}}{\l}Z_\a(u,\Phi_{\l+\a})\frac{\psi_\a(\b)}{\b-\Phi_\a}\right)\nonumber\\
&\times\frac{\l}{\Phi_\l}\frac{\Phi_{\l+\a}-\b}{\l-\psi_\a(\beta)}=\frac{\l}{\l-\psi_\a(\beta)}\left(Z_\a(u,\b)-Z_\a(u,\Phi_{\l+\a})\frac{\psi_\a(\b)}{\l}\frac{\Phi_{\l+\a}-\Phi_\a}{\b-\Phi_\a}\right)\nonumber.
\end{align}
Also, by considering Proposition~\ref{prop:exit1} for the negative of $X$, see also Corollary~\ref{cor:exit2}, we 
arrive at
\begin{align*}
&\e_u\left(e^{-\a \tauR_a^+-\b (X_{\tauR_a^+}-a)};\tauR_a^+<\infty\right)\\
&=\e\left[e^{-\a T^D}\e_{u+D}\left(e^{-\a \tau_a^+-\b (X_{\tau_a^+}-a)};\tau_a^+<\infty\right)\right]\e e^{-\a T^U-\b U}\\
&=\e e^{-\a T^D-\Phi_\a(a-u-D)}\e e^{-\a T^U-\b U}=e^{-\Phi_\a(a-u)}\frac{\l}{\Phi_\l}\frac{\Phi_{\l+\a}-\Phi_\a}{\l-\psi_\a(\Phi_\a)} \frac{\Phi_\l}{\Phi_{\l+\a}+\b}\\&=e^{-\Phi_\a(a-u)}\frac{\Phi_{\l+\a}-\Phi_\a}{\Phi_{\l+\a}+\b},
\end{align*} 
because of~\eqref{eq:exita} and the fact that $X_{\tau_a^+}=a$ on $\tau_a^+$.
These two identities were obtained in~\cite[Thm.\ 3.1]{poisson_exit} by virtue of a rather technical argument using the expression for the potential density of~$X$.

\subsection{Parisian ruin}
Finally, we relate our results to previous literature on Parisian ruin. Firstly, taking $u=0$ and $\beta=0$ in \eqref{eq:tauR0} (use $Z_\alpha(0,\beta)=1$) one retrieves Corollary 3.2 of \cite{lrz}, which is based on exponential implementation delay. Furthermore, from the form of \cite[Eq.49]{lrz} one can, after some lengthy calculations, obtain the following expression for 
Erlang$(2,\lambda)$ implementation delay:
\begin{equation}\label{eq:lrz}\e_0\left(e^{-\a\tau_0^{(2)-}};\tau_0^{(2)-}<\infty\right)=
\frac{\l}{\l+\a}-\frac{\a}{(\l+\a)^2}\frac{\Phi_{\l+\a}-\Phi_\a}{\Phi_\a}\frac{\Phi_{\l+\a}}{\Phi'_{\l+\a}},\end{equation}
where the derivative is with respect to the subindex. 
We can alternatively obtain \eqref{eq:lrz} directly using the results of this paper: \eqref{eq:parisian} and \eqref{eq:tauR0} imply 
\begin{align*}
&\e_0\left(e^{-\a\tau_0^{(2)-}};\tau_0^{(2)-}<\infty\right)=
\e\left[e^{-\a T^U}\e_{U}\left(e^{-\a\tauR_0^-};\tauR_0^-<\infty\right)\right]\e e^{-\a T^D}\\
&=\frac{\l}{\l+\a}\e\left[e^{-\a T^U}\left(Z_\a(U,0)-Z_\a(U,\Phi_{\l+\a})\frac{\a}{\l}\frac{\Phi_{\l+\a}-\Phi_\a}{\Phi_\a}\right)\right]\e e^{-\a T^D}.
\end{align*}
Using Lemma~\ref{lem:WZ} and noting again that $Z_\a(0,\beta)=1$ we get
\begin{align*}
&\e\left(e^{-\a T^U}Z_\a(U,0)\right)=\frac{\Phi_\l}{\Phi_{\l+\a}}\frac{\a+\l}{\l},\\
&\e\left(e^{-\a T^U}Z_\a(U,\Phi_{\l+\a})\right)=\lim_{h\downarrow 0}\frac{\Phi_\l}{\Phi_{\l+\a}-\Phi_{\l+\a-h}}\frac{h}{\l}=\frac{\Phi_\l}{\l\Phi'_{\l+\a}},
\end{align*}
which together with the expression for the Wiener-Hopf factor $\e e^{-\a T^D}$ readily yields~\eqref{eq:lrz}.

\section*{Acknowledgements} The authors would like to thank Ton Dieker for stimulating discussions on the topic. 

\end{document}